\theoremstyle{plain}
\theoremstyle{plain}
\newtheorem{lemma}{Lemma}[section]
\newtheorem{thrm}{Theorem}[section]
\newtheorem{propn}{Proposition}[section]
\theoremstyle{definition}
\newtheorem{define}{Definition}[section]
\newtheorem{remk}{Remark}[section]
\newtheorem{exmp}{Example}[section]
\newtheorem{algthm}{Algorithm}[section]
\newcommand{\bt}{\widetilde{\mathfrak{b}}}
\DeclareMathOperator*{\aut}{GA} \DeclareMathOperator*{\aff}{Af}
\DeclareMathOperator*{\Gl}{GL} \DeclareMathOperator*{\El}{EA}
\DeclareMathOperator*{\BA}{BA} \DeclareMathOperator*{\tame}{T}
 \DeclareMathOperator*{\VA}{VA}
\DeclareMathOperator*{\TV}{TV} 
 \DeclareMathOperator{\tameequiv}{\ensuremath{\sim}}
 \DeclareMathOperator{\stameequiv}{\displaystyle \sim_{\bf{st}}}
\begin{document}
\thanks{This is part of the author's doctoral thesis, written at Washington University under the direction of David Wright}
\title{Some Stably Tame Polynomial Automorphisms}
\author{Sooraj Kuttykrishnan}
\address{Department of Computer Science, Washington University in St. Louis, MO-63112, USA}
\email{sooraj@cse.wustl.edu}
\begin{abstract}
We study the structure of length three polynomial automorphisms of $R[X,Y]$ when $R$ is a UFD. These results are used to prove that if $\text{SL}_m(R[X_1,X_2,\ldots, X_n]) = \text{E}_m(R[X_1,X_2,\ldots, X_n])$ for all $n,\ge 0$ and for all $m \ge 3$ then all length three polynomial automorphisms of $R[X,Y]$ are stably tame.
\end{abstract}
\maketitle
\section{Introducton}

Unless otherwise specified $R$ will be a commutative ring with 1 and
$R^{[n]}=R[X]=R[X_1,..., X_n]$ is the polynomial ring in $n$
variables. A polynomial map is a map $F=(F_1,..., F_n):
\mathbb{A}^n_R \rightarrow \mathbb{A}^n_R$ where each $F_i \in
R^{[n]}$. Such an $F$ is said to be invertible if there exists $G =
(G_1,..., G_n), G_i \in R^{[n]}$ such that $G_i(F_1,..., F_n)=X_i$
for $ 1\le i\le n$. Invertible polynomial maps are in one to one
correspondence with R-automorphisms of the polynomial ring $R^{[n]}$
via the map $ F \rightarrow F^*, F^*(g)= g(F),\ g \in R^{[n]}$. So
we identify the group of R-automorphisms of $R^{[n]}$ with the group
of all invertible polynomial maps in n variables. Notice that this
identification is not an isomorphism but rather an anti isomorphism. We would like to understand the structure of
 \begin{itemize}
\item $\aut_n(R)= \{F=(F_1,\ldots, F_n): F$ is invertible \}.\\
\end{itemize}
Some subgroups of $\aut_n(R)$ are the following.
\begin{itemize}
\item
The affine subgroup:
$\aff_n(R)=\{(a_{11}X_1+a_{12}X_2+\ldots +a_{1n}X_n+b_1,\ldots,a_{n1}X_1+..a_{nn}X_n+b_n):
(a_{ij}) \in \Gl_n(R) \:{\mbox{ and }}\:b_i \in R\}$\\

\item The elementary subgroup: $\El_n(R)=$ The
subgroup generated by automorphisms of the form
$(X_1,X_2,\ldots ,X_{i-1}, X_i+f(X_1,\ldots,X_{i-1},\hat{X_i},
X_{i+1},\ldots , X_n),\\
\ldots ,X_n)
 \text{ where }f\in R[X_1,X_2,\ldots , \hat{X_i},\ldots, X_n],\ i \in \{1,\ldots, n\}$.\\

\item The triangular subgroup: $\BA_n(R)$= The subgroup
 of all R-automorphisms of the form $F=(a_1X_1+f_1(X_2,\ldots , X_n), a_2X_2+f_2(X_3,\ldots, X_n), \ldots , a_nX_n+f_n)$
 where each $a_i \in R^*$ and $f_i \in R[X_{i+1},\ldots ,X_n]$ for all $1\le i \le n-1$ and $f_n \in
 R$.\\

 \item Tame subgroup: $\tame_n(R)=\langle \aff_n(R), \El_n(R)\rangle$.
 \end{itemize}

It is easy to see that $\aut_1(R)={\aff_1}(R)$ when $R$ is a domain. The structure of $\aut_2(R)$ when $R$ is a field $k$ is well known and is the so-called Jung-van der
Kulk theorem or the Automorphism
  Theorem.\cite{Jung}, \cite{vanderKulk}
\begin{thrm}(Jung, van der Kulk)
If k is a field then $\aut_2(k)= \tame_2(k)$. Further, $\tame_2(k)$
is the amalgamated free product of $\aff_2(k)$ and $\BA_2(k)$ over
their intersection.
\end{thrm}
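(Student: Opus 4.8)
The plan is to prove the two assertions in turn: first that every automorphism of $k[X,Y]$ is tame, and second that $\tame_2(k)$ decomposes as the stated amalgamated free product. Both rest on a single degree-reduction mechanism.

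For the tameness statement, given $F = (f,g) \in \aut_2(k)$ I would argue by induction on the total degree $\deg f + \deg g$, the base case being when $F$ is affine. For the inductive step I would first record that the Jacobian determinant $J(f,g) = f_X g_Y - f_Y g_X$ is a nonzero element of $k$, since $F$ is invertible. Writing $d = \deg f$, $e = \deg g$ and letting $\overline f$, $\overline g$ denote the top-degree homogeneous components, the key observation is that $\overline f$ and $\overline g$ are algebraically dependent over $k$: any two forms in two variables are dependent because the degree-zero ratio $\overline f^{\,e}/\overline g^{\,d}$ lies in the rational function field $k(X/Y)$, which has transcendence degree $1$. Dependence of two homogeneous forms forces $\overline f^{\,e} = c\,\overline g^{\,d}$ for some $c \in k^\ast$, and factoring over the algebraic closure then yields a single form $h$ of degree $m = \gcd(d,e)$ with $\overline f = \alpha h^{d/m}$ and $\overline g = \beta h^{e/m}$ for scalars $\alpha,\beta \in k^\ast$.

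With the leading forms pinned down as proportional powers of a common $h$, I would carry out the elementary reduction. The crucial point, for which the Jacobian hypothesis is used, is that in the non-affine case one of the two degrees divides the other, say $d \mid e$, so that $m = d$, $h = \alpha^{-1}\overline f$, and hence $\overline g = \gamma\, \overline f^{\,e/d}$ for a scalar $\gamma$. Composing $F$ on the appropriate side with the elementary automorphism $(X,\, Y - \gamma X^{e/d})$ replaces $g$ by $g - \gamma f^{e/d}$, which cancels the leading form $\overline g$ and strictly lowers $\deg g$, hence the total degree. The resulting map is again an automorphism differing from $F$ by a tame factor, so the induction closes; after finitely many reductions and an affine normalization one reaches the identity, exhibiting $F$ as a product of affine and elementary maps, which proves $\aut_2(k) = \tame_2(k)$.

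For the amalgamated free product assertion I would invoke the standard criterion for such decompositions. Since one has $\tame_2(k) = \langle \aff_2(k), \BA_2(k)\rangle$ (the group $\El_2(k)$ is absorbed into $\langle \aff_2(k),\BA_2(k)\rangle$ and conversely), it suffices to show that every reduced alternating word $\sigma_1 \sigma_2 \cdots \sigma_r$, with factors taken alternately from $\aff_2(k)\setminus C$ and $\BA_2(k)\setminus C$ where $C = \aff_2(k)\cap\BA_2(k)$, represents a nontrivial automorphism. This nontriviality is again a degree phenomenon: a triangular factor outside $C$ raises degree in a controlled, multiplicative way, while an affine factor outside $C$ cannot undo the leading form created by the preceding triangular factor. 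Tracking how leading forms transform through such a word is the ping-pong step, and it forces a reduced word of positive length to have total degree exceeding $2$, so it cannot equal the identity.

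The main obstacle is the heart of the first part: showing that the Jacobian-unit condition forces the divisibility $d \mid e$ (or $e \mid d$) in the non-affine case, so that the elementary reduction is genuinely available, rather than merely the weaker conclusion $\overline f = \alpha h^{d/m}$, $\overline g = \beta h^{e/m}$ with $1 < m < \min(d,e)$. Ruling out this mixed leading-form configuration is exactly where the automorphism hypothesis does work beyond the mere algebraic independence of $f$ and $g$, and it is the step I expect to require the most care. The degree-growth bookkeeping underlying the amalgamation statement is then a variant of the same analysis.
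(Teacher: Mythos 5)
The paper itself offers no proof of this theorem: it is quoted from Jung and van der Kulk with citations, so the only benchmark is the classical argument your outline follows. Measured against that, your proposal has a genuine gap --- in fact two. The first is your justification for the algebraic dependence of the leading forms: the observation that $\overline{f}^{\,e}/\overline{g}^{\,d}$ is homogeneous of degree zero and hence lies in $k(X/Y)$ is true of \emph{any} pair of forms and implies nothing; with $f=X$, $g=Y$ your argument would conclude $X=cY$. Dependence of $\overline{f}$ and $\overline{g}$ has to be extracted from the automorphism hypothesis: since $J(f,g)\in k^{*}$, the degree-$(d+e-2)$ homogeneous component of $J(f,g)$ vanishes whenever $d+e>2$, and that component is exactly $J(\overline{f},\overline{g})$. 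Moreover, in characteristic zero $J(\overline{f},\overline{g})=0$ does force dependence, but in characteristic $p$ it does not (take $X^{p}$ and $Y$), so even the corrected argument proves the theorem only in characteristic zero, whereas the statement is for an arbitrary field.

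The second gap is the one you flag yourself, and it is fatal to the proposal as a proof: the divisibility $d\mid e$ (or $e\mid d$) in the non-affine case is not merely the step requiring ``the most care'' --- it is the entire content of the theorem. Once divisibility is granted, the elementary reduction, the induction on total degree, and the degree bookkeeping behind the amalgamation are all routine; the known proofs (van der Kulk's valuation-theoretic argument, Newton-polygon arguments, the treatment via $k[f,g]=k[X,Y]$ and integrality as in van den Essen's book) spend their effort precisely on ruling out the mixed configuration $\overline{f}=\alpha h^{d/m}$, $\overline{g}=\beta h^{e/m}$ with $m<\min(d,e)$, and no soft argument is known. A proposal that records this as an open obstacle has outlined the shape of the classical proof but not supplied one. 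Your ping-pong sketch for the amalgamated free product is the standard and correct strategy in outline (degrees are multiplicative along a reduced alternating word, so a nonempty reduced word cannot be the identity), but it rests on the same unestablished leading-form control, so it inherits the gap.
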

However, not much is known about $\aut_3(k)$. A natural question is whether $\tame_3(k)$ the whole group $\aut_3(k)$? Nagata
\cite{Nagata} conjectured that the answer is no and gave a candidate
counterexample.
\begin{exmp}\label{Nagatasexample}(Nagata)
$$N=(X+t(tY+X^2),Y-2(tY+X^2)X-t(tY+X^2)^2,t) \in \aut{_3}(k)$$
\end{exmp}
Let $R$ be a domain. Then the following algorithm from \cite{Arnosbook} will determine if $F=(P(X,Y),Q(X,Y))\in \aut_2(R)$ is in $\tame_2(R)$. Let $tdeg(F)=deg(P)+deg(Q)$ and $h_1$ be the highest degree term of $P$ and $h_2$ that of $Q$.
\begin{algthm}\label{algorithmtotesttameness}
Input: $F=(P,Q)$.\\
1) Let $(d_1,d_2)=(deg(P),deg(Q))$.\\
2) If $d_1=d_2=1$, go to 7.\\
3) If $d_1\neq d_2$, go to 5.\\
4) If there exists $\tau \in \aff_2(R)$ with $tdeg(\tau\circ F) < tdeg(F)$, replace $F$ by $\tau \circ F$ and go \indent to 1, else stop : $\notin \tame_2(R)$.\\
5) If $d_2<d_1$, replace $F$ by $(Q,P)$.\\
6) If $d_1\mid d_2$ and there exists $c\in R$ with $h_2= ch_1^{d_2/d_1}$, replace $F$ by $(X,Y-cX^{d_2/d_1})\circ F$ \indent and go to 1, else stop : $F \notin \tame_2(R)$.\\
7) If $\det JF \in R^*$, stop: $F\in \tame_2(R)$, else stop : $F\notin \tame_2(R)$.
\end{algthm}
Using this algorithm we can easily conclude that $N\notin \tame_2(k[t])$. We say that $N$ is $`t'$ wild.
Shestakov and Umirbaev in 2002 \cite{NagataisWild} proved that $N
\notin \tame_3(k)$ and thus proved Nagata's conjecture. \\
We can extend $N$ from the Example \ref{Nagatasexample} naturally as $\widetilde{N}=(N,W) \in \aut_4(k)$. Martha Smith proved \cite{MarthaSmith} that $\widetilde{N}\in \tame_4(k)$.
\begin{define}
Let $ F,G \in \aut_n(R)$. Then
\begin{enumerate}
              \item $F$ is \emph{stably tame} if there exists $m \in {\mathbb
{N}}$ and new variables $ X_{n+1},\ldots, X_{n+m}$ such that the
extended map $\widetilde{F}=(F, X_{n+1},\ldots, X_{n+m})$ is tame.\\
 i.e $(F, X_{n+1},\ldots, X_{n+m})\in \tame_{n+m}(R)$

 \item  $F$ is \emph{tamely equivalent}${(\tameequiv)}$ to $G$ if there exists $H_1, H_2 \in
\tame_n(R)$ such that $ H_1\circ F \circ H_2 = G$.

\item $F$ is \emph{stable tamely equivalent}$(\stameequiv)$ to $H
\in \aut_{n+m}(R)$ if there exists $ \widetilde{H_1}, \widetilde{H_2
}\in \tame_{n+m}(R)$ such that $\widetilde{H_1} \circ \widetilde{F}
\circ \widetilde{H_2}=H$ where $\widetilde{F}=(F, X_{n+1},\ldots ,
X_{n+m})$
 \end{enumerate}
\end{define}

So $N$ from Nagata's example is stably tame with one more variable. Also, $N$ fixes `$t$' and so $N \in
\aut_2(k[t]).$ Viewed this way, by the automorphism theorem $N$ is a
tame k(t)-automorphism. In fact this phenomenon occurs in a more
general situation as described in the next section.
\section{Length Of An Automorphism}
\begin{propn}
Let $R$ be a domain $K$ its fraction field and $F\in \aut_2(R)$.
Then $F=L\circ D_{a,1}\circ F_{m}\circ F_{m-1}\circ...\circ F_1$
where $L=(X+c, Y+d),\ D_{a,1}=(aX,Y),\ F_i=(X,Y+f(X))\text{ or }
F_i=(X+g(Y),Y)$ for some $c,d \in R,\ a \in R^*,\ f(X),\ g(X) \in
K[X] $
\end{propn}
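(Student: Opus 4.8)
The plan is to pass to the fraction field and exploit the Automorphism Theorem. Since $P,Q \in R[X,Y] \subseteq K[X,Y]$, we have $F \in \aut_2(R) \subseteq \aut_2(K)$, and Theorem 1.1 (Jung--van der Kulk) gives $\aut_2(K) = \tame_2(K)$, so $F$ is tame over $K$. The task is then to convert an arbitrary tame factorization over $K$ into the rigid normal form of the statement, in which the only non-elementary factors are a single diagonal $D_{a,1}$ and a single translation $L$, and to show that the coefficients $c,d$ are forced into $R$ and $a$ into $R^*$, even though the elementary factors $F_i$ are permitted to carry denominators from $K$.

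First I would normalize the constant term. Put $c = P(0,0)$ and $d = Q(0,0)$; since $P,Q \in R[X,Y]$ these lie in $R$, so $L = (X+c,Y+d) \in \aut_2(R)$ and $\tilde F := L^{-1}\circ F = (P-c,\,Q-d)$ is an $R$-automorphism fixing the origin. It then suffices to factor $\tilde F$ as $D_{a,1}\circ F_m\circ\cdots\circ F_1$. To do this I would run the reduction of Algorithm 1.1 over the field $K$, where Steps 5--6 always apply since each required division $h_2/h_1^{d_2/d_1}$ can be performed in $K$, and where termination is guaranteed because $\tilde F$ is tame. Each reduction step left-composes $\tilde F$ with an elementary triangular map $(X,Y+f(X))$ or $(X+g(Y),Y)$, $f,g\in K[X]$, and these can be taken to fix the origin (the degree-lowering elementaries carry no constant term, and only the linear part of any affine correction is needed to cancel the top-degree terms when $d_1=d_2>1$). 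Hence the reduction terminates at an affine map $A$ which, being a composite of origin-fixing maps together with the origin-fixing $\tilde F$, is linear: $A\in\Gl_2(K)$. Reading the reduction backwards and inverting the elementary factors (which are again origin-fixing and triangular) yields $\tilde F = A\circ F_m\circ\cdots\circ F_1$.

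It remains to absorb $A$ into a single $D_{a,1}$ plus further elementary factors. With $a=\det A$, the matrix $D_{a,1}^{-1}A$ lies in $\mathrm{SL}_2(K)$, and over a field $\mathrm{SL}_2(K)=\mathrm{E}_2(K)$ by Gaussian elimination; thus $A = D_{a,1}\circ N$ with $N$ a product of linear transvections $(X+\lambda Y,Y)$, $(X,Y+\mu X)$, $\lambda,\mu\in K$. These are themselves elementary triangular maps and may be appended to the list $F_m,\ldots,F_1$. The same device writes any unwanted second diagonal $(X,bY)$ as transvections times $D_{b,1}$, so a single $D_{a,1}$ indeed suffices. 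This gives $\tilde F = D_{a,1}\circ F_{m}\circ\cdots\circ F_1$ and hence $F = L\circ D_{a,1}\circ F_m\circ\cdots\circ F_1$ with every $F_i$ elementary over $K$. For the scalar, the chain rule gives $\det JF = \det J(D_{a,1}) = a$, since translations and elementary maps have Jacobian determinant $1$; and as $F$ is an $R$-automorphism, $JF$ is invertible over $R[X,Y]$, so $\det JF \in R[X,Y]^\ast = R^\ast$ because $R$ is a domain. Hence $a\in R^\ast$.

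The step I expect to be most delicate is precisely this interface between the field and the ring. The tame factorization and the $\mathrm{SL}_2$-reduction genuinely take place over $K$, so one must verify that the origin-normalization really forces the leftover affine factor $A$ to be linear, with no stray $K$-translation creeping back in, and that collapsing all diagonal content into one $D_{a,1}$ does not reintroduce a scalar outside $R^\ast$. Once the origin is fixed and the Jacobian determinant is computed, the translation is controlled by $R$ and the diagonal by $R^\ast$, while the elementary factors $F_i$ are left free to absorb the denominators coming from working over $K$.
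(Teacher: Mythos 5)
Your proposal is correct in substance but takes a genuinely different route from the paper. After the identical normalization $\tilde F = L^{-1}\circ F \in \aut_2^0(R)$, the paper does not run a degree reduction: it invokes Wright's amalgamated free product description of the augmentation-preserving tame group $\tame_2^0(K)$ \cite{Amalgamatedfreeproduct}, which hands over a factorization into origin-fixing elementaries and diagonals $D_{a,b}$ with no translations at all, then uses $\mathrm{SL}_2(K)=\mathrm{E}_2(K)$ to reduce $D_{a,b}$ to $D_{ab,1}$, and finally the explicit commutation relations $F_1\circ D_{a,1}=D_{a,1}\circ F_1'$ and $F_2\circ D_{a,1}=D_{a,1}\circ F_2'$ to collect all diagonal content into a single leftmost factor; it gets $a\in R^*$ by comparing linear parts (that of $\tilde F$ lies in $\Gl_2(R)$, that of the elementary word in $\mathrm{SL}_2(K)$), which is essentially your Jacobian-determinant computation in different clothing. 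Your constructive route via Algorithm 1.2 buys self-containedness --- only Jung--van der Kulk and the algorithm's correctness are needed, both quoted in the paper --- at the cost of bookkeeping, and there is one slip in that bookkeeping: since each step of the algorithm replaces $F$ by $\tau\circ F$, unwinding gives $\tilde F=\tau_1^{-1}\circ\cdots\circ\tau_k^{-1}\circ A$ with the linear leftover $A$ on the \emph{right}, not $\tilde F=A\circ F_m\circ\cdots\circ F_1$ as you assert; moreover the swaps of step 5 leave linear factors interspersed inside the word. Your device of decomposing each linear factor into transvections times a diagonal handles the $\mathrm{SL}_2$ content, but by itself it does not reposition the resulting diagonals $D_{b,1}$, which still sit in the middle of the word; to merge them into a single leftmost $D_{a,1}$ you need precisely the diagonal-past-elementary commutation relations displayed in the paper (conjugation by $D_{a,1}$ rescales $f$, resp.\ $g$, but preserves the two elementary types). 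With that one standard move your argument closes: the origin-fixing discipline making $A$ linear, the appeal to $\mathrm{SL}_2(K)=\mathrm{E}_2(K)$ (also used by the paper), and the conclusion $a=\det JF\in R[X,Y]^{\ast}=R^{\ast}$ for the domain $R$ are all sound.
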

\begin{proof}
 Let $F=(P(X,Y),Q(X,Y)), \text{ where } P(X,Y),\ Q(X,Y) \in R[X,Y]$ and $ L=(X+c,Y+d),\text{ with }\ c=P(0,0)\text{ and }d=Q(0,0)$. Let $G=L^{-1}\circ F\in \aut_2^0(R)$.
   Viewed as an element of $\aut_2^0(K)$, by the Automorphism Theorem
   $G\in \tame_2^0(K)$. When R is a domain, by the results of Wright \cite{Amalgamatedfreeproduct},
   the group $\tame_2^0(K)$ of tame automorphisms of $K[X,Y]$ preserving the
augmentation has a similar description as a free amalgamated product
as $\aut_2(k)$ where k is any field. In particular, $\tame_2^0(K)$
is generated by the automorphisms \vspace{-.75pc}

$$F_1=(X,Y+f(X)),\ F_2=(X+g(Y),Y),\
D_{a ,b }=(a X,b Y)$$ where $f(X)\in K[X],\ g(Y)\in R[Y],f(0)=g(0)=0
,a ,b\in K^{\ast }$. Since $D_{a ,b }=D_{ab,1}\circ D_{b ^{-1},b }$
and $SL_{2}(K)=E_{2}(K)$ we have that $D_{b^{-1},b }$ is
 a product of elementary linear automorphisms and hence we can assume that
 $b=1$. We also have the following equalities.
 \begin{align*}
 F_1\circ D_{a,1}&=(aX,Y+f(aX))=D_{a,1}\circ F_1^\prime\text{ where }F_1^\prime
=(X,Y+f(aX)). \\
 F_2\circ D_{a,1}&=(aX+g(Y),Y)=D_{a,1}\circ F_2^\prime \text{ where }
F_2^\prime =(X+a^{-1}g(Y),Y).\end{align*}

So if $G \in \aut_2^0(R)$ then $G= D_{a,1}\circ F_{m} \circ F_{m-1}
\circ \ldots \circ F_2 \circ F_1$ where each $F_i$ is either of the type
$(X, Y+f_i(X))$ or $ (X+g_i(Y),Y), f_i(X),g_i(X)\in K[X] \text{ and
} a\in K^*$ The linear components of $G$ and $F_{m} \circ F_{m-1}
\circ \ldots\circ F_2 \circ F_1$ are in $GL_{2}(R)$ and $SL_{2}(K)$,
respectively. This  implies that $a \in R^{\ast }$ and both
$D_{a,1},\ F_{m} \circ F_{m-1} \circ...\circ F_2 \circ F_1 \in
\aut_2^0(R)$.
\end{proof}
\begin{define}\
\begin{enumerate}
\item \emph{Length} of $F \in \aut_2^0(R)$ is the smallest natural number m such that
 $F=D_{a,1}\circ F_m\circ F_{m-1} \circ \ldots \circ F_2 \circ F_1$ where each $F_i$ is either
of the type $(X, Y+f_i(X))$ or $ (X+g_i(Y),Y)$ with $f_i(X),\ g_i(X)
\in K[X], a\in R^*$ and $f_i(0)=g_i(0)=0$.
\item $\text{L}^{(m)}(R)=\{F \in {\aut}_2^0(R):\ F \text { is of length } m\}$
\end{enumerate}
\end{define}
\begin{remk}\label{Da1doesn't matter}If $F \in \text{L}^{(m)}(R)$ as above and $F=D_{a,1}\circ F_m\circ F_{m-1} \circ \ldots \circ F_2 \circ F_1 \in \text{L}^{(m)}(R)$ then $F$ is tamely equivalent to $G=F_m\circ F_{m-1} \circ \ldots \circ F_2 \circ F_1$. Thus $F$ is stably tame iff $G$ is stably tame.
\end{remk}
Clearly if $F\in \text{L}^{(1)}(R)$ then $F \in \tame_2(R)$. Suppose $F \in \text{L}^{(2)}(R)$. Then $F= D_{a,1}\circ F_2\circ F_1 \text{ with } F_1=(X, Y+f_1(X))$ and $ F_2=(X+g(Y),Y)$ as in the definition above. $ G=D_{a,1}^{-1}\circ F=(X+g(Y+f(X)),Y+f(X))\in \aut_2(R) \Rightarrow f(X) \in R[X]$. Putting $X=0$ in the first coordinate of $G$ we get that $g(Y) \in R[Y]$. So $F \in \tame_2(R)$. Thus the first non trivial case is of length three.

Now lets go back to Nagata's example.
\begin{align*}
\mbox{ Let } \:F_1&=(X,Y+{{\displaystyle{\displaystyle X^2\over t}}},t) \text{ and } F_2=(X+t^2Y,Y)\\
\mbox{ Then }\: N &=F_1^{-1}\circ F_2 \circ F_1.
\end{align*}
So Nagata's example is of length three and it is stably tame with one
more variable. Drensky and Yu \cite{TameAndWild} began a systematic
study of length three automorphisms and proved the following result.
\begin{thrm}(Drensky, Yu) Let k be a field of characteristic zero and $ F \in \text{ L }^{(3)}(k[t])$ such that
 $F = F_1^{-1}\circ G \circ F_1$ where $F_1=(X,Y+f(X)), G=(X+g(Y),Y)$ with $f(X),\ g(X) \in k[t][X].$
  Then $F$ is stably tame with one more variable.
\end{thrm}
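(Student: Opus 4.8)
The plan is to realize $F$ as the exponential of an explicit locally nilpotent derivation and then use the single extra variable to ``untwist'' the conjugation while staying over $R=k[t]$, reducing what remains to an instance of $\mathrm{SL}_m=\mathrm{E}_m$. First a preliminary remark: if $f$ and $g$ happen to lie in $k[t][X]$ then $F_1,G\in\El_2(k[t])$ and $F=F_1^{-1}\circ G\circ F_1$ is already tame with no extra variable; the content is therefore the case where $f,g\in k(t)[X]$ carry genuine powers of $t$ in their denominators while $F$ itself remains polynomial (the Nagata situation $f=X^2/t$, $g=t^2Y$). By Remark~\ref{Da1doesn't matter} I may strip any diagonal factor, so it suffices to treat $F=F_1^{-1}\circ G\circ F_1$ exactly and prove $(F,Z)\in\tame_3(k[t])$.

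Setting $s=Y+f(X)$, one checks directly that $F$ fixes $s$ and that $F=\exp(D)$ for the derivation $D=g(s)\,(\partial_X-f'(X)\partial_Y)$, which satisfies $D(s)=0$ and $D^2(X)=0$; equivalently $D=\log F$. Because $k$ has characteristic zero and $F\in\aut_2(k[t])$ is unipotent, the coefficients of $D$, a priori in $k(t)[X,Y]$, in fact lie in $k[t][X,Y]$: the logarithm series terminates and the only denominators introduced are the integers $i$, which are invertible in $k$. In particular $P:=g(s)=F_{(1)}-X\in k[t][X,Y]$. This reframes the statement as: the exponential of a derivation that is triangular over $k(t)$ (conjugate by $F_1$ to $g(Y)\partial_X$) but locally nilpotent with polynomial coefficients over $k[t]$ is stably tame over $k[t]$.

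Next I introduce $Z$ and seek a factorization of $\widetilde F=(F,Z)$ into $k[t]$-elementary and affine maps. The guiding identity is $F=F_1^{-1}\circ G\circ F_1$, which would finish things at once if $F_1$ and $G$ were elementary over $k[t]$; they are not, because of the $t$-powers in the denominators of $f,g$. The point of $Z$ is to carry those $t$-powers. The same cancellation that makes $F$ polynomial, namely that the denominators of $f$ are matched by the $t$-divisibility of the coefficients of $g$ (so that $g(s)=t^2Y+tX^2$ is a polynomial in the Nagata case), should be reorganized so that each elementary step is performed over $k[t]$, a division by $t^a$ on one coordinate being traded for a multiplication by $t^a$ routed through $Z$. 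Concretely I would conjugate $\widetilde F$ by explicit tame maps that play the role of $F_1$ ``up to $Z$,'' reducing $\widetilde F$ to a map whose $(X,Y)$-part is the honest shear $(X+P,Y)$ with $P\in k[t][X,Y]$, composed with an automorphism that is linear (de Jonqui\`eres) in the remaining variable. After this reduction the only obstruction to tameness is that residual linear part, whose matrix lies in $\mathrm{SL}_m$ over a polynomial ring over $k$; since $k$ is a field, Suslin's theorem gives $\mathrm{SL}_m=\mathrm{E}_m$ for $m\ge 3$ over $k[t][X_1,\dots,X_n]$, the matrix is a product of elementaries, and $\widetilde F\in\tame_3(k[t])$.

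The hard part is the middle step. Since no $k[t]$-elementary map may contain a factor $1/t$, the natural conjugator $F_1$ is simply unavailable over $R$, and the difficulty is that splitting the exponential naively—for instance as $\exp(Z\delta)$ with $\delta=\partial_X-f'(X)\partial_Y$—immediately reintroduces the denominators of $f$, which are tamed only in the paired combination $g(s)$. Thus the entire conjugation must be rearranged so the $t$-denominators appear only virtually and cancel exactly as they do in $F$. I expect this to require an induction on the $t$-adic denominator depth of $f$, carefully tracking which coordinate absorbs each power of $t$ at each stage, together with the characteristic-zero hypothesis (already used to form $\log F$) to license the divisions by integers in the exponential expansion.
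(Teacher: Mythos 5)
There is a genuine gap: the entire content of the theorem is the ``middle step'' that you explicitly leave as an expectation (``I would conjugate $\widetilde F$ by explicit tame maps\dots'', ``I expect this to require an induction\dots''). The logarithm/exponential reformulation is correct in characteristic zero, and your observation that $g(s)$ equals the first coordinate of $F$ minus $X$, hence lies in $k[t][X,Y]$, is right; but this only restates that $F$ is the exponential of a locally nilpotent derivation fixing $s=Y+f(X)$, and produces no factorization of $(F,Z)$. What is actually needed, and what your sketch nowhere supplies, is the arithmetic structure of $f$ and $g$: writing $f=A(X)/a$ with $A(X)\in k[t][X]$ and $a\in k[t]$ having no common factor with $A$, one must prove $g(Y)=D(aY)$ for some $D\in k[t][Y]$ and, in the conjugate case, that $a\mid D(Y)$ --- this is Lemma \ref{length3structurelemma3} (resting on Lemmas \ref{length3structurelemma1} and \ref{length3structurelemma2}, proved by a mod-$p$ analysis of top and bottom degree terms for each irreducible $p\mid a$). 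Without these divisibilities you cannot certify that any of your intermediate maps have entries in $k[t][X,Y,Z]$, so the plan of ``routing powers of $t$ through $Z$'' cannot even be written down; this is not bookkeeping but the crux of the theorem.

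For comparison, the mechanism that actually closes this (the paper's Theorem \ref{length3isstablytame1}, which contains the Drensky--Yu case since $R=k[t]$ is regular) is: extend to $(F,W)$, conjugate by the elementary maps $\tau=(X,Y,W+P_1(X,Y))$ and $\gamma=(X-\bt W,Y,W)$, where $\bt$ is the product of the distinct irreducible factors of the denominator $b$ and $P_1$ is polynomial precisely because of Lemma \ref{length3structurelemma2}, then complete the unimodular row $(a,\bt)$, with $a=1+D^\prime(0)A_1^\prime(0)$, to a matrix in $\mathrm{SL}_3(k[t])$. The resulting $F^1$ is again of length-three conjugate type but over the enlarged base ring $k[t][X]$, with $b(F^1)=b/\bt$, and one inducts on the total multiplicity $s(F)$ of $b$. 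So your instinct about where $\mathrm{SL}_m=\mathrm{E}_m$ (Suslin) enters is exactly right --- it is needed to make the completed matrices tame at later stages --- but the induction is on the multiplicity of the denominator after a change of base ring from $k[t]$ to $k[t][X]$, a move absent from your sketch and the very reason elementary rather than merely affine factors are required. (A side remark: as printed, the hypothesis $f,g\in k[t][X]$ would make $F$ trivially tame, as you note; your reading that $f,g$ lie in $k(t)[X]$ with matched $t$-denominators is the intended one, consistent with Lemma \ref{length3structurelemma1}.)
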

\section{Stable Tameness Of Polynomials}
  Another important notion is the stable tameness of polynomials. This was studied by Berson in \cite{Bersonclassdefn}, Edo and V\'en\'ereau in \cite{EdoAndVenereauOnVariables} and Edo in \cite{EDOtotallystablytame}. We'll give some relevant results from these papers below.\\
   Let A be any commutative ring with 1. A polynomial $P(X)\in A^{[n]}$ is said to be a variable if there
 exists $F \in \aut_n(A)$ such that $F=(F_1,F_2,\ldots F_n) \text{ and } F_1(X)=P(X)$.
\begin{define}\
\begin{align*}
  {\VA}_n(A) =\{P \in A^{[n]}:&\mbox{ There exists}\ F \in \aut_n(A) F=(F_1,F_2,\ldots F_n) \text{ and }\\ &F_1(X)=P(X).\}\\
  {\TV}_n(A)=\{P \in A^{[n]}:&\mbox{ There } \mbox{ exists }\: F \in
\tame_n(A) F=(F_1,F_2,\ldots F_n) \text{ and }\\
 &F_1(X)=P(X).\}
\end{align*}
\end{define}
Following definition is due to Berson \cite{Bersonclassdefn}.
\begin{define}(Berson's Class)
 $l \in \mathbb{N},p_0 \in
 A^*,  g_0,p_1,...p_l \in A\mbox{ and }Q_1,..., Q_l \in A^{[1]},$ we define
 $P_l \in A^{[2]}$ by induction on $l$.
\begin{align*}
 P_0=&p_0X+g_0,\\
 P_1=&p_1Y+Q_1(X),\\
 P_2=&p_2X+Q_2(p_1Y+Q_1(X)),\\
 P_l=&p_lP_{l-2}+Q_l(P_{l-1})\:\: \mbox {for} \:\: l\ge 3.\\
 \mathcal{B}^l(A)&=\{P_l: p_0 \in A^*, g_0, p_1,\ldots , p_l \in A,Q_1,\ldots , Q_l \in
 A^{[1]}\}\\
 \mathcal{B}(A)&=\bigcup_{l \in \mathbb{N}} \mathcal{B}^l(A)\quad \mbox{(Berson's polynomials)}\\
 \mathcal{B}V_2(A)&={\VA}_2(A)\cap \mathcal{B}(A)\quad \mbox{(Berson's variables)}\\
 \mathcal{B}V_2^l(A)&={\VA}_2(A)\cap \bigcup_{i\le\:
 l}\mathcal{B}^i(A)\\
 \end{align*}
\end{define}
 \begin{define}\
 \begin{enumerate}
       \item (Stably tame polynomial) A polynomial $P\in R^{[n]}$ is stably tame if
 there exists $F \in \tame_{n+m}(R),\: m\ge 0$ such that $F=(F_1,F_2,\ldots F_n) \text{ and } F_1(X)=P(X)$
       \item (Totally stably tame polynomial)(Edo,\ \cite{EDOtotallystablytame})
       A polynomial $P\in R^{[n]}$ is totally stably tame if
 there exists a stably tame automorphism $F\in {\aut}_n(R)$ such
 that $F=(F_1,F_2,\ldots F_n) \text{ and } F_1(X)=P(X)$.
     \end{enumerate}
 \end{define}
 Following theorem is claimed by Eric Edo \cite{EDOtotallystablytame}. However, it appears that additional hypothesis are required in his proof.
\begin{thrm}\label{edo'stheorem}
 If $ F\in \mathcal{B}V_2^2(R)$ where R is a UFD then $F$ is
totally stably tame
\end{thrm}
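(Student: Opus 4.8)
The plan is, given a polynomial $P \in \mathcal{B}V_2^2(R)$, to exhibit a single completion $\Theta = (P, Q) \in \aut_2(R)$ that is stably tame; since total stable tameness only asks for one stably tame automorphism with first coordinate $P$, this suffices. Throughout I would argue up to tame equivalence: by Remark \ref{Da1doesn't matter} and the definition of $\tameequiv$, total stable tameness of a polynomial is preserved under tame substitution $P \mapsto P\circ h$ (a stably tame completion of $P\circ h$ yields one of $P$ by post-composing with $h^{-1}$), so I may normalize $g_0 = 0$, $Q_1(0)=Q_2(0)=0$ and drop the diagonal factor $D_{a,1}$. Write $P = p_2 X + Q_2(P_1)$ with $P_1 = p_1 Y + Q_1(X)$.

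First I would clear the degenerate and unit cases. If $p_1 = 0$ then $P \in R[X]$, and a one-variable polynomial is a variable of $R[X,Y]$ only when it is affine with unit leading coefficient, hence tame. If $p_1 \in R^*$ then $\rho = (X, P_1)$ is tame and $P\circ\rho^{-1} = p_2 X + Q_2(Y)$, so $P$ is tamely equivalent to a polynomial of the shape $p_2 X + Q_2(Y)$; on such a form the tame move $(X,Y)\mapsto (X - s(Y), Y)$ subtracts $p_2\,s(Y)$ from $Q_2$, so I may reduce $Q_2$ modulo $p_2$, and combined with the unimodularity $(p_2, Q_2'(Y)) = (1)$ forced by $P$ being a variable this collapses many subcases to an affine (hence tame) variable. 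When moreover both $p_1,p_2 \in R^*$, the completion $\Theta = (P, P_1) = \mu\circ\rho$ with $\mu = (p_2 X + Q_2(Y), Y)$ is a product of two tame maps, so $P$ is outright tame; tameness of the completed map can be certified by Algorithm \ref{algorithmtotesttameness}.

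The substance is the case where at least one of $p_1,p_2$ is not a unit. Here the natural completion $\Theta = (P, P_1)$ is an automorphism only over the fraction field $K$, where it has length at most two; the task is to promote this $K$-automorphism to an $R$-automorphism of controlled length. Using the structural description of $\aut_2^0(K)$ from the previous section, any two $K$-completions of $P$ differ by a post-composition, so every $K$-completion has the form $Q = a\,P_1 + b(P)$ with $a \in K^*$, $b \in K[T]$; I would solve for $a$ and $b$ forcing $Q \in R[X,Y]$ with $\det J(P,Q) \in R^*$. To pin down the needed divisibility relations I would reduce $P$ modulo each prime $\pi$ of the UFD $R$ dividing $p_1$ or $p_2$: a variable stays a variable under $R \to R/\pi$, and the reduced $\bar P$ collapses (for $\pi \mid p_1$ to a polynomial in $X$, for $\pi \mid p_2$ to a polynomial in $\overline{P_1}$), so being a variable over the domain $R/\pi$ forces the relevant composite to be affine with unit leading coefficient. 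Assembling these congruences across all primes should give enough rigidity to realize the $R$-completion $\Theta$ as a length-three conjugate automorphism $F_1^{-1}\circ G\circ F_1$, precisely the shape of Nagata's map in Example \ref{Nagatasexample} and of the Drensky--Yu theorem.

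Finally, with $P$ completed to such a length-three conjugate $\Theta$, I would obtain stable tameness by the construction for these automorphisms: one extra variable $Z$ suffices, and $(P, Q, Z)$ is written as a product of elementaries via the commutator identity underlying the Drensky--Yu theorem and Martha Smith's treatment of the extended Nagata map \cite{MarthaSmith}; then $\Theta$ is a stably tame completion of $P$, so $P$ is totally stably tame. I expect the fully non-unit case to be the real obstacle: there the elementary reduction in the enlarged polynomial ring leaves an invertible linear cocycle over $R[X_1,\dots,X_n]$ that need not be a product of elementary matrices, and clearing it seems to demand $\mathrm{SL}_m(R^{[n]}) = \mathrm{E}_m(R^{[n]})$ rather than the bare UFD hypothesis — which is exactly the additional assumption the statement appears to be missing.
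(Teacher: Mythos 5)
You could not have known this, but the paper contains no proof of Theorem \ref{edo'stheorem} to compare against: it is quoted from Edo \cite{EDOtotallystablytame}, with the explicit caveat that ``additional hypothesis are required in his proof,'' and the paper's own contribution (Theorem \ref{length3isstablytame1}) restores provability precisely by adding the hypothesis $\text{SL}_m(R[X_1,\ldots,X_n])=\text{E}_m(R[X_1,\ldots,X_n])$ for all $n\ge 0$, $m\ge 3$. Your proposal therefore has to stand on its own as a proof of the bare-UFD statement, and it does not: your final paragraph concedes that the fully non-unit case needs $\text{SL}_m=\text{E}_m$ to clear a leftover invertible linear part over $R[X_1,\ldots,X_n]$. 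So what you have written is not a proof but a diagnosis that the statement is under-hypothesized --- a diagnosis that happens to match the paper's stated belief, and whose mechanism is exactly what occurs in the paper's second proof, where the unimodular row $(a,\bt)$, $a=1+D^\prime(0)A_1^\prime(0)$, is completed to a matrix $A\in\text{SL}_3$ whose entries lie in $R[X]$ at the next stage of the induction on $s(F)$.

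Beyond that, your route contains a concretely wrong step: the plan to realize an $R$-completion of a $\mathcal{B}^2$-variable as a conjugate $F_1^{-1}\circ G\circ F_1$ and finish with the Drensky--Yu one-extra-variable argument \cite{TameAndWild}. By Lemma \ref{length3structurelemma1} a general completion has the form $F_2\circ G_1\circ F_1$ with \emph{independent} numerators $A_1,A_2$; conjugacy forces $A_2=-A_1$ and, by Lemma \ref{length3structurelemma3}, the much stronger divisibility $a\mid D(Y)$. Wright's Example \ref{wright's example} defeats the reduction: over $R=k[t]$ with $\operatorname{char}k=0$, the polynomial $P=X+t(t^2Y+X^2)$ is a Berson variable of level $2$, but writing $P=X+g(Y+f(X))$ forces $g(S)=t^3S$ and $f(X)=X^2/t^2$, so the \emph{only} candidate conjugate completion has second coordinate $Y-2XS/t-S^2$ with $S=t^2Y+X^2$, which contains the non-integral term $2X^3/t$; the genuine completion instead uses $F_2=(X,\,Y-(X^2/t^2+2X^3/t))\neq F_1^{-1}$. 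Hence non-conjugate completions are unavoidable, the commutator identity behind Drensky--Yu and Martha Smith \cite{MarthaSmith} does not apply, and the correct engine is the paper's: write $F=(aX+\bt P_1(X,Y),\,Y+P_2(X,Y))$ using Lemma \ref{length3structurelemma2} (which gives divisibility of $D$ and the $A_i$ by the radical $\bt$ of $b$ --- not conjugacy, as your prime-by-prime reduction seems to hope), adjoin one variable, absorb $(a,\bt)$ into $A\in\text{SL}_3(R)$, and induct on $s(F)$ under the $\text{SL}=\text{E}$ hypothesis. Your normalizations via tame equivalence and the unit/degenerate cases are essentially fine (modulo characteristic-$p$ care in the $p_1=0$ case), but the core of the argument is missing.
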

 \begin{remk}
 \
 \begin{enumerate}
                \item If $F\in R^{[n]}$ is totally stably tame then
                it is stably tame.
                 \item If $P \in R^{[2]}$ is a totally stably tame
 polynomial and $F \in \aut_2(R)$ be such that $F(X_1)=P$ then
 $F$ is a stably tame automorphism.
                                \end{enumerate}
 \end{remk}
 \section{Main Theorem And Structure Of Length Three Automorphisms}
 Let $\text{SL}_n(R)$ denote the set of all $n\times n$ matrices with entries from R and determinant equal to 1 and $\text{E}_n(R)$ denote the group generated by the set of all nxn elementary matrices with entries from $R$.

\begin{thrm}(Main Theorem)\label{length3isstablytame1}\ Suppose $R$ is a UFD such that
\begin{equation*}
\text{SL}_m(R[X_1,X_2,\ldots, X_n]) = \text{E}_m(R[X_1,X_2,\ldots, X_n])
  \end{equation*}
for all $n\ge 0$ and for all $m\ge 3$.
 Then $F\in \text{L}^{(3)}(R) \Rightarrow F$ is stably tame.
\end{thrm}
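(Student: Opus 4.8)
The plan is to reduce a general length-three automorphism to a normal form, identify its first coordinate as a Berson variable of level at most two, and then invoke Edo's theorem. First I would use Remark \ref{Da1doesn't matter} to discard the diagonal factor: writing $F = D_{a,1}\circ F_3\circ F_2\circ F_1$ with each $F_i$ an elementary shear over $K$, it suffices to prove that $G = F_3\circ F_2\circ F_1$ is stably tame. Since $G$ has length exactly three, the three shears must alternate in type, and the substitution $(X,Y)\mapsto(Y,X)$ (a tame involution, whose conjugation preserves stable tameness) interchanges the two alternation patterns; hence I may assume $F_1=(X,Y+f_1(X))$, $F_2=(X+g(Y),Y)$, $F_3=(X,Y+f_3(X))$ with $f_1,f_3\in K[X]$ and $g\in K[Y]$.

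A direct composition gives $G=(P,Q)$ with first coordinate $P = X + g\bigl(Y+f_1(X)\bigr)$ and second coordinate $Q = Y+f_1(X)+f_3(P)$. Because $G$ is an $R$-automorphism, $P$ and $Q$ lie in $R[X,Y]$ even though the intermediate data $f_1,g,f_3$ carry denominators (exactly as in Nagata's example, where $g(Y)=t^2Y$ and $f_1(X)=X^2/t$ conspire to give $P=X+t^2Y+tX^2\in k[t][X,Y]$). The structural heart of the argument is to show that these cancellations are forced: $P$ can be written in the inductive form $P = p_2X + Q_2(p_1Y+Q_1(X))$ with $p_1,p_2\in R$ and $Q_1,Q_2\in R^{[1]}$, so that $P\in\mathcal{B}V_2^2(R)$. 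I would establish this by comparing coefficients in descending powers of $Y$, using $P\in R[X,Y]$ to descend the a priori $K$-valued coefficients of $g$ into $R$ and absorbing the $f_1$-denominators into $Q_1$; this is where the detailed structure theory of $\text{L}^{(3)}(R)$ is required.

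Granting $P\in\mathcal{B}V_2^2(R)$, Theorem \ref{edo'stheorem} shows $P$ is totally stably tame, so there is a stably tame $F'=(P,Q')\in\aut_2(R)$. Then $G\circ (F')^{-1}$ fixes the first coordinate, hence equals $(X,\,cY+d(X))$ for some unit $c\in R^*$ and $d\in R[X]$ (since $R$ is a domain, so $(R[X])^*=R^*$), and this is tame. Consequently $G=(X,cY+d(X))\circ F'$ is stably tame, and by Remark \ref{Da1doesn't matter} so is $F$.

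The main obstacle is twofold. First is the structural identification $P\in\mathcal{B}V_2^2(R)$ over $R$ rather than over $K$; the denominator bookkeeping must be organized carefully and uniformly across the possible degree configurations of $f_1$ and $g$. Second, and this is where the standing hypothesis enters, Edo's theorem as originally stated is incomplete, and its proof requires precisely the condition $\text{SL}_m(R[X_1,\ldots,X_n])=\text{E}_m(R[X_1,\ldots,X_n])$ for $m\ge 3$ and all $n\ge 0$: after stabilizing with extra variables, the descent of the extended automorphism produces a matrix in a special linear group over a polynomial ring over $R$, and one must rewrite it as a product of elementary matrices, i.e. as a product of tame shears, to finish. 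Verifying that this is the only gap, and that the hypothesis closes it, is the crux of the proof.
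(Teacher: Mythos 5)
Your proposal is correct and takes essentially the same route as the paper's first proof of Theorem \ref{length3isstablytame1}: your coefficient-descent step is precisely what Lemma \ref{length3structurelemma1} establishes (namely $f_i=A_i(X)/b$, $g=D(bY)$, so the first coordinate is $X+D(bY+A_1(X))\in\mathcal{B}^2(R)$, placing $F\in\mathcal{B}V_2^2(R)$), after which Theorem \ref{edo'stheorem} and the totally-stably-tame remark finish exactly as you argue. Your accounting of where the $\text{SL}_m=\text{E}_m$ hypothesis enters (through the repair of Edo's argument) also matches the paper's reading, though note the paper separately supplies a second, self-contained proof by induction on $s(F)$ via unimodular-row completion in $\text{SL}_3$, which your proposal does not attempt.
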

Eric Edo claimed this result in \cite{EDOtotallystablytame} (Theorem 7) without the assumption that $\text{SL}_m(R[X_1,X_2,\ldots, X_n]) = \text{E}_m(R[X_1,X_2,\ldots, X_n])$ for all $n\ge 0$ and for all $m\ge 3$. A brief outline of his proof is as follows. If $F =(F_1,F_2) \in \text{L}^{(3)}(R)$ then $F_1$ has the form $qX+H(pY+G(X)),\ q,p\in R,\ H,G \in R[X]$. If $ht(p)=0$, then $F$ is tame. The next step is to show that $F\stameequiv F^1 (F_1^1,F_2^1) \in \text{L}^{(3)}(R[X])$ where $F_1^1$ has the form $q^1X+H^1(p^1Y+G^1(X)),\ q^1,p^1\in R[X],\ H,G \in R[X][W]$ with $ht(p^1)<ht(p)$ and then we are done by induction on $ht(p)$. However this step involves composing $F$ with an affine map $a_3(R) \in \aff_2(R)$. At the next step of the induction such a map will be in $\aff_2(R[X])$ and hence not necessarily in $\tame_3(R)$. So we believe that the assumption that $\text{SL}_m(R[X_1,X_2,\ldots, X_n]) = \text{E}_m(R[X_1,X_2,\ldots, X_n])$ for all $n\ge 0$ and for all $m\ge 3$ is required. Also, our methods are quite different from his.

\begin{remk}$\text{SL}_m(R[X_1,X_2,\ldots, X_n]) = \text{E}_m(R[X_1,X_2,\ldots, X_n])$ for all $n\ge 0$ and for all $m\ge 3$ if $R$ is a regular ring.
\end{remk}
\begin{remk}In \cite{berson-2007} Berson,van den Essen and Wright recently proved that if $F \in \aut_2(R)$, where $R$ is a regular ring then $F$ is stably tame. This is a much stronger result. However, our result does not require the ring to be regular.
\end{remk}
We will give two different proofs of Theorem \ref{length3isstablytame1}. First proof will use Theorem \ref{edo'stheorem}. The second proof is different, self contained and will use the hypothesis that $\text{SL}_2(R[X_1,X_2,\ldots, X_n]) = \text{E}_2(R[X_1,X_2,\ldots, X_n])$ for all $n$.
However, before proving these theorems, we would like to know if there are examples of length three automorphisms that are not covered by Drensky and Yu's theorem \cite{TameAndWild}. i.e
Does $F \in \text{L}^{(3)}(R) \Rightarrow F= F_1^{-1}\circ G_1 \circ F_1$? Automorphisms of this kind are called conjugates.
 The answer is no and here is an example due to Wright\cite{Wrightprivate}.
\begin{exmp}\label{wright's example}
\
  Let $t\in R\backslash\{0\}$ and $F= F_2\circ G_1 \circ F_1$ where \\
$F_1 =(X, Y+{\displaystyle X^2\over \displaystyle t^2}),\ G_1=(X+t^3Y, Y) \text{ and }F_2=(X, Y-{{\displaystyle {\displaystyle X^2\over t^2}+{2X^3\over t}}})$.\\

\noindent Then $F=(X+t(t^2Y+X^2),\ Y-(t^2Y+X^2)^2-2tYX+t^2(t^2Y+X^2)^3+$\\
 \indent \hspace{8cm}$3X^2(t^2Y+X^2)+3tX(t^2Y+X^2)^2)$.
\end{exmp}

Following \cite{TameAndWild} we prove the below lemma.
\begin{lemma}\label{length3structurelemma1} Let $F\in L^{(3)}(R)$, and  $F =F_2\circ G_1 \circ F_1$ where $F_i=(X,Y+f_i(X)), G_1 =(X+g(Y),Y),
f_i \in K[X], g \in K[Y], f_i(0)=g(0)=0.$ Then
$f_i=\frac{\displaystyle A_i(X)}{\displaystyle b}$ and $g= D(bY)$
where $A_i(X) \in R[X],D(Y) \in R[Y] , b \in R $ and $b$ and $A_i$
do not have any common factors in $R[X]$.
\end{lemma}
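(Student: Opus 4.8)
The plan is to read the two coordinates of $F$ off explicitly and then push the condition $F\in\aut_2(R)$ back onto the coefficients. Composing from the inside out,
\[
F=F_2\circ G_1\circ F_1=\big(X+g(Y+f_1(X)),\; Y+f_1(X)+f_2(X+g(Y+f_1(X)))\big),
\]
so, writing $P=X+g(Y+f_1(X))$ for the first coordinate, we have $F=(P,\,Y+f_1(X)+f_2(P))$. Since $F$ is an $R$-automorphism, both coordinates lie in $R[X,Y]$. Setting $Y=0$ in the first coordinate gives $g(f_1(X))=P(X,0)-X\in R[X]$, and setting $Y=0$ in the second gives $f_2(P_0)+f_1\in R[X]$, where $P_0:=P(X,0)=X+g(f_1(X))\in R[X]$ satisfies $P_0(0)=0$. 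I may assume $f_1,f_2,g$ are all nonzero (otherwise $F$ would have length $<3$). Write $f_1=A_1/b$ in lowest terms, so $A_1\in R[X]$, $A_1(0)=0$, $b\in R$, and $b$ shares no factor with $A_1$; this is the $b$ I claim works throughout.

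First I would show $g=D(bY)$ with $D\in R[Y]$, i.e.\ that the coefficient $c_j$ of $Y^j$ in $g$ satisfies $b^j\mid c_j$. Fix an irreducible $p\mid b$ and let $e=v_p(b)$. Because $A_1$ is primitive at $p$, the Gaussian valuation gives $v_p(f_1)=-e$ exactly. The naive bound $v_p\big(\sum_j c_jf_1^{\,j}\big)\ge 0$ coming from $g(f_1)\in R[X]$ is not enough by itself, since the summands could a priori cancel; the device that rules this out is to replace $v_p$ by the monomial valuation $w$ on $K[X]$ obtained by assigning $X$ a small positive irrational weight $\delta$, so that $w(\sum u_iX^i)=\min_i(v_p(u_i)+i\delta)$. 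For $\delta$ small this makes $w(f_1)=i_1\delta-e$, where $i_1\ge 1$ is the least degree at which $A_1$ has a coefficient of $v_p=0$, and the irrationality of $\delta$ forces the values $w(c_jf_1^{\,j})=v_p(c_j)-je+ji_1\delta$ to be pairwise distinct. Hence there is no cancellation, $w(g(f_1))=\min_j w(c_jf_1^{\,j})\ge 0$, so each $v_p(c_j)-je+ji_1\delta\ge 0$; since $v_p(c_j)-je$ is an integer while $0<ji_1\delta<1$, this forces $v_p(c_j)\ge je$. Running over all $p\mid b$ gives $b^j\mid c_j$, so $D(Y):=\sum_j (c_j/b^j)Y^j\in R[Y]$ and $g=D(bY)$.

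The essential point that remains—and the step I expect to be the main obstacle—is that $f_2$ has the same reduced denominator $b$. The inverse $F^{-1}=F_1^{-1}\circ G_1^{-1}\circ F_2^{-1}$ has exactly the same shape with $(f_1,f_2,g)$ replaced by $(-f_2,-f_1,-g)$, so the previous paragraph applied to $F^{-1}$ shows that the reduced denominator $b_2$ of $f_2$ also ``divides'' $g$ in the same sense: $v_p(c_j)\ge je'$ for all $j$, where $e'=v_p(b_2)$. Now fix $p$ and suppose, for contradiction, that $e'>e$. Then every term of $g(f_1)=\sum_j c_jf_1^{\,j}$ has $v_p\ge j(e'-e)\ge1$, so $g(f_1)\equiv0\pmod p$ and therefore $P_0=X+g(f_1)\equiv X\pmod{pR_{(p)}[X]}$. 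Writing $f_2=p^{-e'}u$ over the localization $R_{(p)}$ with $v_p(u)=0$, the congruence $P_0\equiv X$ gives $u(P_0)\equiv u(X)\pmod p$, whence $v_p(u(P_0))=0$ and $v_p(f_2(P_0))=-e'$. But $v_p(f_1)=-e>-e'$, so $v_p(f_2(P_0)+f_1)=-e'<0$, contradicting $f_2(P_0)+f_1\in R[X]$. Hence $e'\le e$; applying the same argument to $F^{-1}$ (using $f_1(P_0')+f_2\in R[X]$ with $P_0'=X-g(-f_2)$) gives $e\le e'$. Therefore $e=e'$ at every $p$, so $f_2=A_2/b$ in lowest terms with the same $b$, and together with $g=D(bY)$ this is exactly the assertion of the lemma.
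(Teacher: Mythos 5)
Your proof has one genuine gap, located at the step ``Running over all $p\mid b$ gives $b^j\mid c_j$, so $D(Y):=\sum_j(c_j/b^j)Y^j\in R[Y]$.'' Your valuation argument controls $v_p(c_j)$ only at primes $p$ dividing $b$ (and, via $F^{-1}$, at primes dividing $b_2$), while a priori $c_j\in K$; nothing you have invoked rules out denominators of $g$ at a prime $q\nmid bb_2$, so the conclusion $D\in R[Y]$ does not follow. The data you selected really is too weak for this: take $f_1=f_2=qX$ (so $b=b_2=1$, trivially in lowest terms) and $g=Y^2/q$. Then every condition you use holds --- $g(f_1)=qX^2\in R[X]$, $f_2(P_0)+f_1\in R[X]$, $P_0'=X-g(-f_2)=X-qX^2\in R[X]$, and $f_1(P_0')+f_2\in R[X]$ --- yet $g\notin R[Y]$. (No automorphism realizes this data, but it is consistent with everything you actually used; the failure mode is that when $q$ divides the content of $A_1$, your weighted valuation $w_q(f_1)$ picks up a positive contribution and the no-cancellation estimate only bounds $v_q(c_j)$ from below by a negative quantity.) The repair costs one line and is precisely the paper's opening move, which you skipped: substitute $X=0$ (rather than $Y=0$) in the first coordinate of $F$; since $f_1(0)=0$ this gives $g(Y)\in R[X,Y]\cap K[Y]=R[Y]$ outright, so $c_j\in R$, and then your estimate $v_p(c_j)\ge j\,v_p(b)$ at the primes dividing $b$ legitimately yields $c_j/b^j\in R$ and $g=D(bY)$.

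With that patch, the rest of your argument is correct and is genuinely different from the paper's. For the divisibility $b^j\mid c_j$, the paper expands $g(Y+A_1/b_1)$ by Taylor's formula in two variables and iterates: it cancels the factor $A_1$ using $\gcd(A_1,b_1)=1$, sets $X=0$ to get $b_1\mid g'$, writes $g'=b_1g_1$, and repeats, peeling off one power of $b_1$ per coefficient. Your monomial valuation with irrational weight $\delta$ extracts all of this at one stroke from the single one-variable condition $g(f_1)\in R[X]$, and it is arguably more robust: the paper's passage from $b_1\mid g'(Y)$ to ``$b_1$ divides each coefficient of $Y^i$, $i\ge1$'' silently divides by $i$, which needs care when $b_1$ shares irreducible factors with integer multiples or in positive characteristic, an issue your approach avoids entirely. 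For $b_1=b_2$ the paper is shorter: setting $Y=0$ in the integrality conditions for $F$ and $F^{-1}$ and clearing denominators gives $b_2A_1(X)+b_1A_2(X+D(A_1(X)))\equiv 0\pmod{b_1b_2}$, whence $b_1\mid b_2$ because $\gcd(A_1,b_1)=1$, and symmetrically $b_2\mid b_1$. Your two-sided argument --- showing $e'>e$ forces $P_0\equiv X\pmod p$ and then comparing exact $p$-adic values of $f_2(P_0)$ and $f_1$ in the localization --- proves the same equality prime by prime, without first needing $D\in R[Y]$, at the cost of more bookkeeping; the symmetry $(f_1,f_2,g)\mapsto(-f_2,-f_1,-g)$ for $F^{-1}$ that you exploit is the same one underlying the paper's pair of conditions. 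Both routes are valid, and yours localizes the argument in a way that isolates exactly where the hypothesis that $R$ is a UFD enters.
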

\begin{proof}
 We rewrite  $f_i = \frac{\displaystyle A_i(X)} {\displaystyle b_i}, $  where $A_i(X) \in R[X], A_i(0)=0, b_i \in R $
  and $A_i(X)$ and $b_i$ has no common factors in $R[X]$.\\
Since $F=(X+g(Y+f_1(X), Y+f_1(X)+f_2(X+g(Y+f_1(X)))) \in
\aut_2^0(R),$
 \begin{equation}\label{1.1}
 g(Y+f_1(X))=g\left (Y+{\frac{A_1(X)}{b_1}}\right )=
\sum _{i=0}^{n} {\frac{g^{(i)}(Y)A_1^{i}(X)}{i!b_1^{i}}}\in R[X,Y].
\end{equation}
Putting $X=0$ in \eqref{1.1} we get  $g(Y)\in R[Y] \Rightarrow
\displaystyle{\sum _{i=1}^{n}
{\frac{g^{(i)}(Y)A_1^{i}(X)}{i!b_1^{i}}}\in R[X,Y]}$. So,
$$A_1(X)\left(\displaystyle{{g^\prime(Y) b_1^{n-1}\over 1!}+ {g^{\prime
\prime}(Y)A_1(X) b_1^{n-2}\over 2!}+...+
{g^{(n)}(Y)A_1(X)^{n-1}\over n!}}\right )\equiv 0 \left ( {\text{
mod }b_1^n}\right ).$$ Since $A_1(X)$ and $b_1$ does not have a
common factor we get,
\begin{equation}\label{1.2}
\left(\displaystyle{{g^\prime(Y) b_1^{n-1}\over 1!}+ {g^{\prime
\prime}(Y)A_1(X) b_1^{n-2}\over 2!}+
 ...+ {g^{(n)}(Y)A_1(X)^{n-1}\over n!}}\right )\equiv 0 \left ( {\text{ mod }b_1^n}\right )
 \end{equation}
Putting $X=0$ in \eqref{1.2} we get,
 \begin{equation*}
 g^\prime(Y) {b_1^{n-1}} \equiv 0 \left (\text{ mod }b_1^n\right )\\
 \Rightarrow g^\prime(Y) \equiv 0 \left (\text{ mod }b_1\right )
\end{equation*}
Hence the coefficient of $Y^i$ in $g(Y)$ is divisible by $b_1$ for
$i \ge 1$. Let $g^{\prime}(Y)=b_1g_1(Y)$ for some $g_1(Y)\in R[Y]$.
So \eqref{1.2} becomes
\begin{align*}
\left(\displaystyle{{g_1(Y) b_1^{n}\over 1!}+ {g_1^{\prime}(Y)A_1(X)
b_1^{n-1} \over 2!}+...+ {g_1^{(n-1)}(Y)A_1(X)^{n-1}b_1\over
n!}}\right
)&\equiv 0 \left ( {\text{ mod }b_1^{n}}\right )\\
\Rightarrow \left(\displaystyle{{g_1(Y) b_1^{n-1}\over 1!}+
{g_1^{\prime}(Y)A_1(X) b_1^{n-2} \over 2!}+...+
{g_1^{(n-1)}(Y)A_1(X)^{n-1}\over n!}}\right )&\equiv 0 \left (
{\text{
mod }b_1^{n-1}}\right ) \\
\Rightarrow A_1(X)\left(\displaystyle{{g_1^{\prime}(Y) b_1^{n-2}
\over 2!}+...+ {g_1^{(n-1)}(Y)A_1(X)^{n-2}\over n!}}\right )&\equiv
0 \left ( {\text{
mod }b_1^{n-1}}\right ) \\
\end{align*}
Again since $gcd(A_1(X),b_1)=1$ we get,
\begin{equation}\label{1.3}
\left(\displaystyle{{g_1^{\prime}(Y) b_1^{n-2} \over 2!}+
{g_1^{\prime\prime}(Y) b_1^{n-3}A_1(X) \over 3!}+...+
{g_1^{(n-1)}(Y)A_1(X)^{n-2}\over n!}}\right )\equiv 0 \left (
{\text{ mod }b_1^{n-1}}\right )
\end{equation}
 Putting $X=0$ in \eqref{1.3} we get,
 \begin{equation*}
g_1^{\prime}(Y)\equiv 0 \text{ mod } b_1
 \end{equation*}
 Again the coefficient of
$Y^i$ in $g_1(Y)$ is divisible by $b_1$ for $i\ge 1$ and hence the
coefficient of $Y^{i+1}$ in g(Y) is divisible by $b_1^2$. Repeating
this process we get that the coefficient of $Y^i$ in $g(Y)$ is
divisible by $b_1^i$ for all $i\ge 1$. i.e $g(Y)=D(b_1Y)$for some
$D(Y) \in R[Y]$. Thus we have
\begin{align*}
\notag F&=\Bigl(X+D(b_1Y+A_1(X)),Y+\frac{A_1(X)}{b_1}+\frac{A_2(X+D(b_1Y+A_1(X)))}{b_2}\Bigr)\text{ and}\\
F^{-1}&=(X-D(b_1Y-\frac{b_1A_2(X)}{b_2}),Y-\frac{A_2(X)}{b_2}-\frac{A_1(X-D(b_1Y-\frac{\displaystyle b_1A_2(X)}{\displaystyle b_2}))}{b_1}\Bigr)
\end{align*}
Now we will show that $b_1=b_2$.
\begin{equation}\label{1.4}
F\in {\aut}_2^0(R)\Rightarrow {\displaystyle{A_1(X)\over
b_1}+{A_2(X+D(b_1Y+A_1(X)))\over b_2}} \in R[X,Y] \end{equation}

\begin{equation}\label{1.5}F^{-1} \in {\aut}_2^0(R) \Rightarrow
{\displaystyle{-A_2(X)\over b_2}-{A_1(X-D(b_1Y-\frac{\displaystyle b_1A_2(X)}{\displaystyle b_2}))\over b_1}}
\in R[X,Y]\end{equation}
 Putting $Y=0$ in \eqref{1.4} we get that
$$b_2A_1(X)+ b_1A_2(X+D(A_1(X)))\equiv 0(\text{ mod }(b_1b_2).$$
Since $A_1$ and $b_1$ have no common factors it follows that
$b_2\equiv 0(\text{ mod }b_1)$. Similarly from \eqref{1.5} we get
that $ b_2\equiv 0(\text{ mod }b_1)$. Thus $b_2=cb_1$ for some $c\in
R^*$. Replacing $A_2$ with $\displaystyle {A_2\over c}$ and $b_2$
with $b_1$ the result follows.
\end{proof}
\subsection{A Proof of Theorem \ref{length3isstablytame1}}
We may assume that $F$ is of the form in the hypothesis of Lemma \ref{length3structurelemma1}. So from Lemma \ref{length3structurelemma1} we get that
\begin{equation}\label{length3F}
F=\Bigl(X+D(bY+A_1(X)),Y+\frac{A_1(X)+A_2(X+D(bY+A_1(X)))}{b}\Bigr)
\end{equation}
Taking $1$ for $p_2$ , $b$ for $p_1$, $D(Y)$ for $G_2(Y)$ and
$A_1(X)$ for $G_1(X)$ we see that the first co-ordinate of $F$ is in
$ \mathcal{B}^2(A)$ and hence $ F \in \mathcal{B}V_2^2(A)$. By
theorem 3, first coordinate of $F$ is totally stably tame and
hence $F$ is stably tame. This concludes the proof using Theorem \ref{edo'stheorem}
\section{Another Proof of Theorem \ref{length3isstablytame1}}
We now proceed with some preparations for a self contained proof of Theorem \ref{length3isstablytame1}.
\begin{lemma}\label{length3structurelemma2} We use notations from the Lemma \ref{length3structurelemma1}.
 Let $p$ be an irreducible factor of $b$. Then $p$ divides $D(Y)$ or each of the following polynomials.
\begin{enumerate}
\item $D(Y)-D^\prime(0)Y$
\item $A_1(X)-A_1^\prime(0)X$
\item $A_2(X)-A_2^\prime(0)X$
\end{enumerate}
 \end{lemma}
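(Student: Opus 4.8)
The plan is to fix an irreducible factor $p$ of $b$ and reduce the two integrality conditions expressing $F,F^{-1}\in\aut_2^0(R)$ modulo $p$. Since $R$ is a UFD and $p$ is irreducible, $(p)$ is prime and $\barr{R}:=R/(p)$ is a domain; write $\,\bar{\phantom{x}}\,$ for reduction of coefficients into $\barr{R}$. The governing identities are $A_1(X)+A_2\bigl(X+D(bY+A_1(X))\bigr)\equiv 0\pmod b$ and $A_2(X)+A_1\bigl(X-D(bY-A_2(X))\bigr)\equiv 0\pmod b$, coming from \eqref{1.4}, \eqref{1.5} in the normalized form \eqref{length3F}. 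Because $p\mid b$ every occurrence of $bY$ dies mod $p$, so the variable $Y$ disappears and these collapse to one-variable functional equations over $\barr{R}$:
\[
\bar A_2\bigl(X+\bar D(\bar A_1(X))\bigr)=-\bar A_1(X),\qquad
\bar A_1\bigl(X-\bar D(-\bar A_2(X))\bigr)=-\bar A_2(X)\quad\text{in }\barr{R}[X].
\]

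Next I would record the structural facts. From $A_i(0)=g(0)=0$ we get $\bar A_i(0)=\bar D(0)=0$. Crucially, $\gcd(b,A_i)=1$ in $R[X]$ (Lemma \ref{length3structurelemma1}) together with $p\mid b$ forces $p\nmid A_i$, hence $\bar A_i\neq 0$ and $\alpha:=\deg\bar A_1\ge 1$, $\gamma:=\deg\bar A_2\ge 1$. The whole statement is now the dichotomy: either $\bar D=0$, which is exactly $p\mid D(Y)$ (the first alternative), or $\bar D\neq 0$ and all three reductions are linear, which is exactly conclusions (1)--(3) since $\bar D(0)=\bar A_i(0)=0$ make ``linear mod $p$'' equivalent to ``$p$ divides the non-linear part.'' So I assume $\bar D\neq 0$, i.e.\ $\delta:=\deg\bar D\ge1$, and aim to prove $\alpha=\gamma=\delta=1$.

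The core is a degree count over the domain $\barr{R}$, where $\deg(f\circ h)=\deg f\cdot\deg h$ for non-constant $h$ with no leading-coefficient cancellation. In the first equation the inner map $\phi(X)=X+\bar D(\bar A_1(X))$ satisfies $\deg\bar D(\bar A_1(X))=\delta\alpha$, and comparing degrees of $\bar A_2\circ\phi$ with $-\bar A_1$ gives $\gamma\cdot\deg\phi=\alpha$. If $\delta\alpha\ge2$ then $\deg\phi=\delta\alpha$, so $\gamma\delta\alpha=\alpha$ forces $\gamma\delta=1$, whence $\gamma=\delta=1$; if $\delta\alpha=1$ then already $\delta=1$, and the degenerate possibility $\deg\phi=0$ is ruled out (it would force $\bar A_2(0)=-\bar A_1$, i.e.\ $\bar A_1=0$), so again $\gamma=1$. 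Either way the first equation yields $\gamma=\delta=1$. Running the identical count on the second equation, where $\bar A_1$ is now the outer map, yields symmetrically $\alpha=\delta=1$. Combining the two gives $\alpha=\gamma=\delta=1$, completing the dichotomy.

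The step I expect to be the genuine obstacle is recognizing that a \emph{single} integrality condition is not enough: the equation coming from $F$ pins down the degree of the outer function $\bar A_2$ (and of $\bar D$) but nothing in it forces $\bar A_1$ to be linear, so one must also exploit the mirror-image condition coming from $F^{-1}$ to control $\bar A_1$. The secondary nuisance is the low-degree bookkeeping—ensuring no spurious leading-coefficient cancellation in $\phi$ and disposing of the degenerate constant case—but over the domain $\barr{R}$ this is routine.
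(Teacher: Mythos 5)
Your proof is correct and takes essentially the same approach as the paper: reduce the two integrality conditions (from $F$ and $F^{-1}$) modulo the prime $p$ so that $bY$ vanishes, then run a leading-term/degree count over the domain $R/pR$, using $\gcd(A_i,b)=1$ to guarantee $\bar A_i\neq 0$, to force $\deg\bar A_1=\deg\bar A_2=\deg\bar D=1$ when $\bar D\neq 0$. The only difference is organizational: the paper splits into the cases $n_2\ge n_1$ and $n_1\ge n_2$ with explicit subcase computations, whereas you extract $\gamma=\delta=1$ from the first equation and $\alpha=\delta=1$ from the mirror equation, correctly handling the one degenerate cancellation $\deg\phi=0$ that makes the second equation indispensable.
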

\begin{proof}
Since $b_1=b_2=b$, from \eqref{1.4}and \eqref{1.5} we get the
following.
\begin{align}\label{3.6}
&\frac{\displaystyle A_1(X)+ A_2(X+D(bY+A_1(X)))}{\displaystyle b}
\in R[X,Y]\\
\label{3.7}&\frac{\displaystyle A_2(X)+
A_1(X-D(bY-A_2(X)))}{\displaystyle b} \in R[X,Y]
\end{align}
Putting $Y=0$ in \eqref{3.6} and \eqref{3.7} we have,
\begin{align}
\label{pdivides1}&p\mid {\displaystyle A_1(X)+ A_2(X+D(A_1(X)))}\text{ and }\\
\label{pdivides2}&p\mid{\displaystyle A_2(X)+ A_1(X-D(-A_2(X)))}
\end{align}

Let $S=\frac{\displaystyle R}{\displaystyle pR}$ and denote the image of $a\in R$ in $S$ by $\overline{a}$. Suppose $p$ does not divide $D(Y)$.
 Let $\overline{A_i(X)}=\sum_{j=1}^{n_i}{\overline{a_{ij}}X^j}$
 for $i=1,2$
 and $\overline{D(Y)}=\sum_{j=1}^{n_3}{\overline{d_j}Y^j},\ \overline{d_{n_3}} \neq \overline{0}$. Since $p$ does not divide $D(Y)$ we may assume that $n_3 \ge 1$. Also since $gcd(A_i,b)=1$ we may further assume that $\overline{a_{n_i}}\neq 0$ for $i=1,2$.

 {\bf Case 1 ($n_2\ge n_1$):-}\\
  Since $p \mid A_1(X)+ A_2(X+D(A_1(X)))$,\\
 \begin{align}\label{*}
 \notag \overline{A_1(X)+ A_2(X+D(A_1(X)))}&=
 \sum_{j=1}^{n_1}{\overline{a_{1j}}X^j}+\sum_{j=1}^{n_2}{\overline{a_{2j}}
 \Bigl(X+\sum_{l=1}^{n_3}{\overline{d_j}(\sum_{m=1}^{n_1}{\overline{a_{1j}}X^m})^l\Bigr)^j}}\\
 &=\overline{ 0 }
 \end{align}
Suppose $n_1=n_3=1$ and $n_2>1$ then the top term in the expression
\eqref{*} is \\
$\overline{a_{2n_2}(1+d_1a_{11})^{n_2}}X^{n_2}=\overline{0}$
which implies $\overline{1+d_1a_{11}}=\overline{0}$. Now lets look at the
lowest degree term in the expression \eqref{*} which is
$$\overline{a_{11}+a_{21}(1+d_1a_{11})}X=\overline{a_{11}}X=
\overline{0}.\ \text{ Hence } \overline{A_1(X)}=\overline{0}.$$
 This is a contradiction to assumption that $gcd(A_1(X),b)=1$. Thus $n_1=n_3=1 \Rightarrow n_2=1$.

 So lets assume that $n_1>1$ or $n_3>1$. We look at the coefficient of the highest degree term in the expression \eqref{*}.

 Suppose $n_1>1$ and $n_3>1$. Then $n_2>1$ and hence $n_1n_2n_3>n_1$.\\
  So the highest degree term in \eqref{*} is
  $\overline{ a_{2n_2}d_{n_3}^{n_2}a_{1n_1}^{n_2n_3}}X^{n_1n_2n_3}=\overline{0}$.\\
This is a
contradiction to the assumption that $\overline{d_{n_3}},\ \overline{a_{1n_1}},\text{ and } \overline{a_{2n_2}}$ are not equal to $\overline{0}.$

Now suppose $n_1>1$ and $n_3=1$. Again $n_2>1$ and hence
$n_1n_2>n_1$.\\
So the highest degree term in the expression \eqref{*} is $
\overline{a_{2n_2}d_1^{n_2}a_{1n_1}^{n_2n_3}}X^{n_1n_2}=\overline{0}$.\\
This is a contradiction to the assumption that $\overline{d_1},\ \overline{a_{1n_1}},\text{ and } \overline{a_{2n_2}}$ are not equal to $\overline{0}.$

Last case is when $n_3>1$ and $n_1=1$.
Again $n_2 \ge 1$ and so $ n_2n_3>n_1$ .\\
So the highest degree term in expression \eqref{*} is
$\overline{a_{n_2}d_{n_3}^{n_2}a_{11}^{n_2n_3}}X^{n_2n_3}=\overline{0}$, again a
contradiction.

Thus $n_1=n_3=1$ which implies $n_2=1$ as well.\\

{\bf Case 2 ($n_1\ge n_2$)}:-\\
Since $ p\mid{\displaystyle A_2(X)+ A_1(X-D(bY-A_2(X)))}$
(from \eqref{pdivides2}) we get the following.\\
\begin{align}\label{**}
 \notag \overline{-A_2(X)- A_1(X-D(-A_2(X)))}&=
 -\sum_{j=1}^{n_1}{\overline{a_{2j}}X^j}-\sum_{j=1}^{n_2}{\overline{a_{1j}}
 \Bigl(X-\sum_{l=1}^{n_3}{\overline{d_j}(\sum_{m=1}^{n_1}{\overline{-a_{1j}}X^m})^l\Bigr)^j}}\\
 &=\overline{ 0 }
 \end{align}
 Proof of Case 2 is exactly like Case 1. We can look at the top term
 of \eqref{**} to conclude that $n_1=n_2=n_3=1$.
\end{proof}

Let $P(X,Y)=D(bY+A_1(X))-D^\prime(0)A_1^\prime(0)X$ and $\bt$ be the
product of irreducible factors of $b$. Then by Lemma
\ref{length3structurelemma2} we have that $\bt \mid P(X,Y)$. So we can rewrite \ref{length3F} as
 \begin{equation*}\label{rewritinglength3F}
  F=(aX+\bt P_1(X,Y), Y+P_2(X,Y)
  \end{equation*}
  where $a =1+D^\prime(0)A_1^\prime(0)$ and
 \begin{align} P_1(X,Y)&={\displaystyle \frac{P(X,Y)}{\bt}}=\displaystyle {D(bY+A_1(X))-(a-1)X \over \bt}\\
  \text{ and } \notag P_2(X,Y)&=\displaystyle\frac{A_1(X)+A_2(X+D(bY+A_1(X)))}{\displaystyle b}
\end{align}

The following lemma was proved in \cite{TameAndWild} when $R=k[t]$. We reprove it here when $R$ is any UFD. The proof given here is simpler.
\begin{lemma}\label{length3structurelemma3} Let $F=F_1^{-1}\circ G \circ F_1  \in \text{L}^{(3)}(R)$ where $F_1=(X,Y+\frac{\displaystyle A_1(X)}{\displaystyle a}),\ G=(X+g(Y),Y),\ A_1(X) \in R[X],\ g(Y) \in K[Y],\ a \in R$. Then $g(Y)=D(aY) \text{ for } \\
D(Y) \in R[Y] \text{ and } a \mid D(Y)$.
\end{lemma}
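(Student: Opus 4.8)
The plan is to read off from $F\in\aut_2^0(R)$ two membership conditions and then mine them exactly as in Lemmas~\ref{length3structurelemma1} and \ref{length3structurelemma2}. Writing $f=A_1/a$, which I take in lowest terms ($\gcd(A_1,a)=1$, with $A_1(0)=g(0)=0$ as in the length-three normal form; this coprimality is precisely what the cancellation step of Lemma~\ref{length3structurelemma1} needs), a direct composition gives
\[ F=\Bigl(X+g(Y+f(X)),\ Y+\tfrac{1}{a}\bigl(A_1(X)-A_1(X+g(Y+f(X)))\bigr)\Bigr). \]
Thus $F\in\aut_2^0(R)$ forces, from the first coordinate, $w:=g(Y+A_1(X)/a)\in R[X,Y]$, and from the second coordinate the congruence $A_1(X)\equiv A_1(X+w)\pmod a$ in $R[X,Y]$. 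Applying the same to $F^{-1}=F_1^{-1}\circ G^{-1}\circ F_1$ (replace $g$ by $-g$) yields in addition $A_1(X)\equiv A_1(X-w)\pmod a$, which I keep in reserve.

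First I would establish $g(Y)=D(aY)$ for some $D\in R[Y]$. This is exactly the first-coordinate computation of Lemma~\ref{length3structurelemma1}: expanding $w=\sum_i g^{(i)}(Y)A_1(X)^i/(i!\,a^i)$, setting $X=0$ to get $g\in R[Y]$, then repeatedly cancelling $A_1$ against $a$ (legitimate since $\gcd(A_1,a)=1$) and setting $X=0$ shows that the coefficient of $Y^i$ in $g$ is divisible by $a^i$. Hence $g(Y)=D(aY)$ with $D\in R[Y]$, and $w=D(aY+A_1(X))$.

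The new content is the refinement $a\mid D$. I would substitute $w=D(aY+A_1(X))$ into the second-coordinate congruence and reduce modulo an irreducible $p\mid a$. Since $aY\equiv0$, we get $\overline{w}=\overline{D}(\overline{A_1}(X))$, and $A_1(X)\equiv A_1(X+w)\pmod p$ collapses to the polynomial identity $\overline{A_1}\!\left(X+\overline{D}(\overline{A_1}(X))\right)=\overline{A_1}(X)$ in the domain $(R/p)[X]$. Because $\gcd(A_1,a)=1$ and $A_1(0)=0$, $\overline{A_1}$ is a nonconstant polynomial, so comparing degrees forces $\overline{D}(\overline{A_1}(X))=0$ and hence $\overline{D}=0$; that is, $p\mid D$. (This recovers, and makes uniform, the two alternatives of Lemma~\ref{length3structurelemma2} in the conjugate case $A_2=-A_1$.) To pass from $p\mid D$ to the full prime power $p^{v_p(a)}\mid D$ I would induct on $v_p(a)$: assuming $p^s\mid D$ with $s<v_p(a)$, write $D=p^sD^{\ast}$ and reduce the same congruence modulo $p^{s+1}$; the Taylor expansion now linearises to $p^sA_1'(X)\,\overline{D^{\ast}}(\overline{A_1}(X))\equiv0\pmod{p^{s+1}}$, and cancelling $p^s$ gives $p\mid D^{\ast}$. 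Running this for every $p\mid a$ and reassembling yields $a\mid D$.

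The hard part will be this last bootstrapping step in positive residue characteristic. The linearised relation $A_1'(X)\,\overline{D^{\ast}}(\overline{A_1}(X))\equiv0$ is informative only when $\overline{A_1}'\neq0$; if $p$ divides every coefficient of $A_1'$ (so $\overline{A_1}$ is additive/inseparable, e.g.\ $\overline{A_1}=\tilde A(X^p)$) the relation degenerates and the induction stalls, and even the degree comparison above can fail because a nonconstant polynomial may acquire a nonzero period. In the Drensky--Yu setting $R=k[t]$ with $\operatorname{char}k=0$ and $p=t$ this cannot occur, since $R/p=k$ has characteristic $0$ and a nonconstant $\overline{A_1}$ automatically has $\overline{A_1}'\neq0$; there both the degree argument and the induction go through cleanly. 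For a general UFD I expect to need either the companion congruence from $F^{-1}$ (using $A_1(X-w)$ alongside $A_1(X+w)$ to control the higher Taylor terms) or to carry the full un-linearised identity $\overline{A_1}(X+\overline{w})=\overline{A_1}(X)$ through the induction and dispose of the additive-polynomial case by hand.
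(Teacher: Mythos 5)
Your first two steps are sound and essentially coincide with the paper's own proof. The paper obtains $g(Y)=D(aY)$ simply by citing Lemma \ref{length3structurelemma1}, as you do in substance; and for the entry point $p\mid D$ your argument is actually cleaner than the paper's: reducing $A_1(X+w)\equiv A_1(X)\ (\mathrm{mod}\ a)$ modulo an irreducible $p\mid a$ and comparing degrees in the domain $(R/p)[X]$ is robust, whereas the paper invokes Lemma \ref{length3structurelemma2} and then reads off the linear term of \eqref{conjugateequations3}, concluding $d_1\equiv 0$ from $2a_1^2d_1\equiv 0\ (\mathrm{mod}\ a_i)$ --- a step that silently requires $a_i\nmid 2a_1$, which $\gcd(A_1,a)=1$ does not guarantee. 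Your worry that the degree comparison might fail because of ``periods'' is unfounded at this first stage: $u=\overline{D}(\overline{A_1}(X))$ satisfies $u(0)=0$ and $\deg u=\deg\overline{D}\cdot\deg\overline{A_1}$, so a nonzero $u$ is never a nonzero constant, and both the case $\deg u\ge 2$ (degree count) and the case $\deg u=1$ (forcing $\overline{A_1}=\alpha X$, $\overline{D}=\delta Y$, whence $\alpha^2\delta=0$) yield contradictions. Your bootstrap --- write $D=p^sD^{\ast}$, linearize the Taylor expansion mod $p^{s+1}$, cancel $p^s$ --- is also exactly the paper's maneuver, which sets $D=a_i^tD_1$ with $\gcd(D_1,a_i)=1$ and, for $t<p_i$, derives $A'(X)D_1(A(X))\equiv 0\ (\mathrm{mod}\ a_i)$.

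The genuine gap is the one you flag yourself: when $\overline{A_1}'=0$ in $(R/p)[X]$ --- possible exactly when $R/p$ has characteristic $\ell>0$ and $\overline{A_1}\in(R/p)[X^{\ell}]$, which $\gcd(A_1,a)=1$ does not exclude --- the linearized relation $A_1'(X)\,\overline{D^{\ast}}(\overline{A_1}(X))\equiv 0$ is vacuous and your induction stalls; you only gesture at remedies (``I expect to need either\ldots'') without carrying either out, and in fact the companion congruence from $F^{-1}$ repackages the same first-order information (summing and differencing the two expansions isolates even and odd Hasse--Taylor terms, but mod $p^{s+1}$ only the $j=1$ term survives in both). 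So as written your proposal proves the lemma only under a separability hypothesis at each prime dividing $a$ (covering in particular residue characteristic zero, the Drensky--Yu setting), not for an arbitrary UFD. You should know, however, that the paper's proof does not clear this hurdle either: it rests on the assertion that $\gcd(A(X),a)=1$ implies $\gcd(A'(X),a)=1$, which fails in positive residue characteristic (take $R=\mathbb{Z}$, $a_i=2$, $A=X^2$: then $\gcd(X^2,2)=1$ but $A'=2X\equiv 0$), and its subsequent extraction of the coefficients $D_1^{(j)}(0)$ divides by $j!$, again characteristic-sensitive. In short, your attempt reproduces the paper's argument, improves the first step, and is honestly incomplete at precisely the point where the paper's proof is also unjustified; closing the inseparable case would require an idea appearing in neither.
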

\begin{proof}
Since $F \in \text{L}^{(3)}(R)$ by Lemma \ref{length3structurelemma1} we have that $g(Y)=D(aY)$. Let $a=a_1^{p_1}a_2^{p_2}\ldots a_l^{p_l}$ where each $a_i$ is irreducible in $R$. Then by Lemma \ref{length3structurelemma2} we know that $a_i\mid D(Y)-D^\prime(0)Y$.
\begin{equation*}
\text{Also, } F=(X+D(aY+A(X)),Y+\frac{\displaystyle A(X)-A(X+D(aY+A(X)))}{\displaystyle a})
\end{equation*}
Putting $Y=0$ in the second coordinate of $F$ we get that
\begin{align}\label{conjugateequations}
\notag A(X)-A(X+D(A(X)))&\equiv 0 \ (\text{ mod }a) \\
 \Rightarrow A(X)-A(X+D(A(X)))&\equiv 0 \ (\text{ mod }a_i^{p_i})  \text{ for every } i
 \end{align}
Similarly putting $Y=0$ in the second coordinate of $F^{-1}$ we get
\begin{align}\label{conjugateequations2}
\notag A(X)-A(X-D(A(X)))&\equiv 0 \ (\text{ mod }a) \\
  \Rightarrow A(X)-A(X-D(A(X)))&\equiv 0 \ (\text{ mod }a_i^{p_i})  \text{ for every } i
\end{align}
From \ref{conjugateequations} and \ref{conjugateequations2} we get that
\begin{equation}\label{conjugateequations3} A(X-D(A(X)) - A(X+ D(A(X)) \equiv 0 \ (\text{ mod }) a_i^{p_i} \text{ for every } i \end{equation}
It is enough to show that for each $i,\ a_i^{p_i} \mid D(Y)$. So we fix an $i$.
\begin{align*}
\text{Let } A(X)&= \sum_{j=1}^n{a_jX^j} \equiv 0 \ (\text{ mod }a_i)  \text{ and }\\
D(Y)&=\sum_{j=1}^m{d_jY^j} \equiv 0 \ (\text{ mod }a_i)
\end{align*}
Looking at the linear part of the left hand side in \ref{conjugateequations3} gives us
\begin{align*}
a_1X-a_1^2d_1X-a_1X-a_1^2d_1X &\equiv 0 \ (\text{ mod }a_i)\\
\Rightarrow 2a_1^2d_1 &\equiv 0 \ (\text{ mod }a_i)\\
\Rightarrow d_1 &\equiv 0 \ (\text{ mod }a_i)\\
\end{align*}
Hence $D(Y) \equiv 0 \ (\text{ mod }a_i)$. Let $D(Y)=a_i^{jt}D_1(Y)$ for some $D_1(Y) \in R[Y]\text{ such that }\\
gcd(D_1(Y),a_i)=1 \text{ and } t\ge 1$. Then \ref{conjugateequations} reads as
\begin{align*}
A(X)-\sum_{j=0}^n{A^{(j)}(X)D_1(A(X))^ja_i^{jt}} &\equiv 0 \ (\text{ mod }a_i^{p_i})\\
\Rightarrow \sum_{j=1}^n{A^{(j)}(X)D_1(A(X))^ja_i^{jt}} &\equiv 0 \ (\text{ mod }a_i^{p_i})
\end{align*}
\
\\

\noindent If $t<p_i$, then we get that $A^\prime(X)D_1(A(X))\equiv 0 \ (\text{ mod }a_i)$. Also, $gcd(A(X),a)=1 \Rightarrow gcd(A^\prime(X),a)=1$.
\begin{align*}
\text{So }D_1(A(X)) &\equiv 0 \ (\text{ mod }a_i)\\
          \sum_{j=0}^{m}{\frac{D_1^{(j)}(0)A(X)^j}{j!}} &\equiv 0 \ (\text{ mod }a_i)\\
\text{Since } D_1(0)=0,\text{ we get that } \sum_{j=1}^{m}{\frac{D_1^{(j)}(0)A(X)^j}{j!}} &\equiv 0 \ (\text{ mod }a_i)\\
          A(X)(\sum_{j=1}^{m}{\frac{D_1^{(j)}(0)A(X)^{j-1}}{j!}}) &\equiv 0 \ (\text{ mod }a_i)\\
         \text{Since }gcd(A(X),a)=1,\ \sum_{j=1}^{m}{\frac{D_1^{(j)}(0)A(X)^{j-1}}{j!}} &\equiv 0 \ (\text{ mod }a_i)\\
       \text{ Putting }X=0 \text{ we get },\ D_1^\prime(0) &\equiv 0 \ (\text{ mod }a_i)
\end{align*}
Proceeding like this we get that $D_1^{(j)}(0) \equiv 0 \ (\text{ mod }a_i)$ which contradicts the fact that $gcd(D_1(Y),a_i)=1$. So $t=p_i$ and we are done.
\end{proof}

\subsection{Proof of the Main Theorem}
Again we may assume that $F$ is of the form in the hypothesis of Lemma \ref{length3structurelemma1}. So we get,
\begin{equation*}
F=\Bigl(X+D(bY+A_1(X)),Y+\frac{A_1(X)+A_2(X+D(bY+A_1(X)))}{b}\Bigr)
\end{equation*}
Let $b=b_1^{s_1}b_2^{s_2}...b_r^{s_r},s_i \ge 1, b_i$ irreducible  in $R$ . We define $s(F)=s_1+s_2+ \ldots + s_r$ and $b(F)=b,\  \bt=b_1b_2 \ldots b_r$. If $b$ is a unit in $R$ then we are done. If not we extend $F$ to $(F,W)\in \L^{(3)}(R[W]))$ and call this extension $F$. Let $\tau=(X,Y,W+P_1(X,Y)),\gamma =(X-\bt W, Y, W).$ Then
\begin{equation}\label{Fandunimodularity}
\gamma\circ F\circ\tau =(aX-\bt W, Y+P_2(X,Y), W+P_1(X,Y))
\end{equation}
 $(a,\bt)$ is a unimodular row and we can extend this to a $3\times 3$ matrix in $\text{SL}_3(R)$ say
 $A= \begin{pmatrix}
a & 0 & -\bt\\
0 & 1 & 0\\
c & 0 & d

\end{pmatrix}$ .
Since $\det(A)=1$, we have
$A^{-1}=\begin{pmatrix}
  d & 0 & \bt \\
  0 & 1 & 0\\
 -c & 0 & a \end{pmatrix}$
 with $ ad+\bt c=1$. \\
 We have $\gamma\circ F\circ\tau\circ A^{-1}=(X,Y+ P_2(dX+\bt W,Y), -cX+aW+P_1(dX+\bt W,Y))$. Substituting for $P_1$ from \ref{rewritinglength3F} we get,
\begin{align*} &\gamma\circ F\circ\tau\circ A^{-1}=\\
                                       &\Biggl(X,Y+P_2(dX+\bt W,Y),-cX+aW+\\
                                       & \hspace{3cm} \frac{\displaystyle D(bY+A_1(dX+\bt W))-(a-1)(dX+\bt W)}{\bt}\Biggr) \\                                                           =&\Bigl(X,Y+P_2(dX+\bt W,Y),W+ \frac{\displaystyle D(bY+A_1(dX+\bt W))+(d-1)X} {\bt} \Bigr) \end{align*}

Notice that $F$ is stable tamely equivalent to $\gamma\circ F\circ\tau\circ A^{-1}.$
For our purpose we may replace $\gamma\circ F\circ\tau\circ A^{-1}$ by
\begin{align*} F^{1} =\Bigl(X,Y+&P_2(dX+\bt W,Y)-P_2(dX,0),\\
                        W+&\frac{\displaystyle D(bY+A_1(dX+\bt W))+(d-1)X - D(A_1(dX))-(d-1)X}{\bt}\Bigr )\\
                     =\Bigl(X,Y+&\displaystyle\frac{A_1(dX+\bt W)+A_2(dX+\bt W+D(bY+A_1(X)))}{\displaystyle b}\\
                     &\hspace{5cm}\frac{-A_1(dX)-A_2(dX+D(A_1(X)))}{\displaystyle b},\\
                     &W+\frac{\displaystyle D(bY+A_1(dX+\bt W))+(d-1)X - D(A_1(dX))-(d-1)X}{\bt} \Bigr)
 \end{align*}

Then $F^1 = F_1^1\circ G^1\circ F^1_1
 \text{ where }$
 \begin{align*}
 F_1^1&=(X,Y+\frac{\displaystyle A^1_1(W)}{b/\bt},W)\\
 G^1&=(X,Y,W+D^1((b/ \bt)Y))\\
 F_2^1&=(X,Y+\frac{\displaystyle A^1_2(W)}{\displaystyle b / \bt},W)\\
 \text{ and }A^1_1(W)&=\frac{\displaystyle A_1(dX+\bt W)-A_1(dX)}{\bt}\\
 A^1_2(W)&=\frac{\displaystyle A_2(dX+\bt W+D(A_1(dX)))-A_2(dX+D(A_1(dX))}{\displaystyle \bt}\\
 D^1(Y)&=\frac{\displaystyle D(bY+A_1(dX))-D(A_1(dX)}{\displaystyle \bt}
 \end{align*}

with $A^1_1,A^1_2 \in R[X][W],D^1 \in R[X][Y]$. Clearly $ b(F^1)=b/\bt$ and hence if $b$ is not a unit in $R$ then $s(F^1)<s(F)$. Then we are done by induction on $s(F)$. Notice that at the next stage of the induction the matrix $A$ appearing in the proof will have entries from $R[X]$. This is why we need the hypothesis that $\text{SL}_2(R[X_1,X_2,\ldots, X_n]) = \text{E}_2(R[X_1,X_2,\ldots, X_n])$ for all $n$.

\section*{Acknowledgements}The author wishes to thank his advisor David Wright for all the guidance and stimulating discussions.
 \bibliographystyle{alpha}
\bibliography{stablytamepaper}

\end{document}